\title{Topological classification of M\"obius transformations}
\author{Tetiana Rybalkina\qquad
Vladimir V. Sergeichuk\\
rybalkina\_t@ukr.net\qquad
sergeich@imath.kiev.ua\\
Institute of Mathematics, Kiev, Ukraine}
\newtheorem{theorem}{Theorem}[section]
\newtheorem{lemma}{Lemma}[section]
\newtheorem{corollary}{Corollary}[section]
\DeclareMathOperator{\tr}{tr}
\DeclareMathOperator{\size}{size}
\begin{document}

\date{}
\maketitle
\begin{abstract}
Linear fractional transformations on
the extended complex plane are
classified up to topological conjugacy.
Recall that two transformations $f$ and
$g$ are called topologically conjugate
if there exists a homeomorphism $h$
such that $g=h^{-1}\circ f\circ h$, in
which $\circ$ is the composition of
mappings.
\end{abstract}

\section{Introduction}

\emph{M\"obius transformations} are linear fractional
transformations of the form
\begin{equation}\label{yrw}
f(z)=\frac{az+b}{cz+d}\,,\qquad ad-bc\ne 0,\quad a,b,c,d\in\mathbb C
\end{equation}
on the extended complex plane $\hat{\mathbb C}:=\mathbb C\cup
\infty$.
The foundations of the theory of
M\"obius transformations are developed in
\cite[Chapters 3 and 4]{Berdon} and \cite[Chapters 8--10]{Mark}.

Since the numbers $a,b,c,d$ can be simultaneously multiplied by any nonzero number without changing $f$, the transformation \eqref{yrw} can be given by the matrix
\begin{equation}\label{feof}
M_f:=\frac{1}{\sqrt{ad-bc}}\begin{bmatrix}
  a & b \\
  c & d \\
\end{bmatrix},
\end{equation}
which has determinant $1$ and is
determined by $f$ uniquely up to multiplication by
$-1$. The composition of
transformations corresponds to the
product of their matrices:
\begin{equation}\label{dfo}
M_{fg}=M_fM_g.
\end{equation}
The trace of a matrix $A$ is denoted by
$\tr A$.

Two
M\"obius transformations $f$ and $g$
are said to be
\begin{itemize}
  \item \emph{conjugate} if there exists a M\"obius transformation $h$ such that
      the diagram
\[
\begin{CD}
\hat{\mathbb C}@>g>>\hat{\mathbb C}\\@VhVV @VVhV
\\\hat{\mathbb C}@>f>>\hat{\mathbb C}
\end{CD}
\] is commutative, i.e., $g=h^{-1}fh$;

  \item \emph{topologically
      conjugate} if there exists a homeomorphism $h:
      \hat{\mathbb C} \to
      \hat{\mathbb C}$ such that $g=h^{-1}fh$ (a mapping $h$ is a
      \emph{homeomorphism} if $h$
      and $h^{-1}$ are continuous
      bijections).
\end{itemize}
If two M\"obius transformations are
conjugate, then they are topologically
conjugate since each M\"obius
transformation is a homeomorphism.

The following criterion of conjugacy is easily obtained from
\cite[Theorem~4.3.1]{Berdon}:
\emph{nonidentity M\"obius
transformations $f$ and $g$ are
conjugate if and only if $\tr M_f=\pm
\tr M_g$}.

If $g=h^{-1}fh$, then $M_g=\pm
M_h^{-1}M_fM_h$ by \eqref{dfo}, and so
conjugate M\"obius transformations are
given by similar matrices
determined up to multiplication by
$-1$. One can take the transforming
matrix $M_h$ such that $M_g$ is the
Jordan form of $M_f$. Since $\det
M_f=\det M_g=1$, \begin{equation}\label{ljkt}
M_g=\begin{bmatrix}
        \lambda  &0 \\
        0 & 1/\lambda \\
      \end{bmatrix}\ (\lambda \ne\pm 1,0) \qquad\text{or}\qquad
M_g=\begin{bmatrix}
        \lambda  &1 \\
        0 & \lambda
      \end{bmatrix} \ (\lambda=\pm 1).
\end{equation}
The matrix $M_g$ is uniquely determined by $f$,
up to multiplication by $-1$ and up to
interchanging the diagonal entries
$\lambda$ and $1/\lambda$ in the first
matrix \eqref{ljkt}. Thus, $f$ is conjugate to
$z\mapsto \lambda^2z$, or $z\mapsto
(1/\lambda^2)z$, or $z\mapsto z+1$, and we obtain a canonical form
for conjugacy \cite[\S 4.3]{Berdon}:
\begin{equation}\label{aim}
\parbox{30em}{\it Each M\"obius transformation is
conjugate
to exactly one M\"obius transformation
of the form
$m_{\mu }(z)= \mu z$ $(\mu\ne 0,1)
$ or $m_{1}(z)= z+1$ $(\mu:=1)$,
in which $\mu $ is determined up to
replacement by $1/\mu$.}
\end{equation}
 The numbers $\mu_1:=\mu $ and
$\mu_2:=1/\mu$ are called the
\emph{multipliers} of $f$. They are defined for a holomorphic map on a Riemann surface in \cite[p. 45]{Milnor}; for a nonidentity M\"obius transformation $f$ they can be calculated by formula
\[
\mu_i=
  \begin{cases}
    f'(z_i) & \text{if } z_i\ne\infty, \\
    \lim\limits_{z\to \infty} \frac{1}{f'(z)} & \text{if } z_i=\infty,
  \end{cases}\qquad i=1,2,
\]
in which $z_1$ and $z_2$ are fixed points of $f$ (their number is 2 or 1; we take $z_1=z_2$ in the latter case).

The main result of this paper is the
following theorem, in which we give 3
criteria of topological conjugacy; the
criterion (iv) was published
in~\cite{Zbir-Inst} in Ukrainian by the first author.

\begin{theorem}\label{klas-drobb}
The following four statements are
equivalent for arbitrary nonidentity
M\"obius transformations
$f,g:\hat{\mathbb C} \to \hat{\mathbb
C}$:
\begin{itemize}
\item[\rm(i)] $f$ and $g$ are
    topologically conjugate;

\item[\rm(ii)] $\tr M_f,\tr
    M_g\notin [-2;2]$ or $\tr
    M_f=\pm\tr M_g$ $($in which
    $[-2;2]$ is the set of all
    $a\in\mathbb R$ satisfying
    $-2\le a\le 2)$;

\item[\rm(iii)] if $\lambda$ is any
    eigenvalue of $M_f$ and
    $\lambda'$ is any eigenvalue of
    $M_g$, then
$|\lambda|,|\lambda'|\ne 1$, or
$\lambda=\pm\lambda'$, or
$\lambda=\pm \bar {\lambda}'$;

\item[\rm(iv)] if $\mu$ is any
    multiplier of $f$ and $\nu$ is
    any multiplier of $g$, then
$|\mu|,|\nu| \ne 1$, or $\mu=\nu$,
    or $\mu=\bar{\nu}$.
\end{itemize}
\end{theorem}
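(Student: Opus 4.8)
The plan is to reduce to the canonical forms of \eqref{aim} and then to separate the purely algebraic equivalence of (ii), (iii), (iv) from the genuinely topological statement (i)$\,\Leftrightarrow\,$(ii). Conjugate transformations are topologically conjugate, and $\tr M_f$ (up to sign), the eigenvalues of $M_f$ (up to sign and inversion), and the multipliers of $f$ (up to inversion) are conjugacy invariants; so it suffices to prove the theorem for $f=m_\mu$ and $g=m_\nu$, with $m_1$ meaning $z\mapsto z+1$. For (ii)$\,\Leftrightarrow\,$(iii)$\,\Leftrightarrow\,$(iv) I would only use that $M_{m_\mu}$ has eigenvalues $\lambda,1/\lambda$ with $\lambda^2=\mu$, that the multipliers $\mu,1/\mu$ are the squares of those eigenvalues, and the elementary remark that $\lambda+1/\lambda\in[-2;2]$ if and only if $|\lambda|=1$: then $\tr M_f\notin[-2;2]\Leftrightarrow|\lambda|\ne1\Leftrightarrow|\mu|\ne1$, so the first alternatives in (ii)--(iv) agree, and when $|\lambda|=|\lambda'|=1$ the identity $\overline{\lambda'}=1/\lambda'$ makes the remaining alternatives agree after passing between the eigenvalues and their squares; one also checks that the truth values of (iii) and (iv) do not change if a different eigenvalue or multiplier is chosen. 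This is routine, so I concentrate on (i)$\,\Leftrightarrow\,$(ii).

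For (ii)$\,\Rightarrow\,$(i): if $\tr M_f=\pm\tr M_g$, then $f$ and $g$ are conjugate by the criterion quoted from \cite[Theorem~4.3.1]{Berdon}, hence topologically conjugate. Otherwise $\tr M_f,\tr M_g\notin[-2;2]$, both maps are loxodromic, and it is enough to show every loxodromic transformation is topologically conjugate to $m_2\colon z\mapsto 2z$. Taking $f=m_\mu$ with $|\mu|>1$ and writing $z=e^{s+it}$ on $\mathbb C\setminus\{0\}$, the maps $m_2$ and $m_\mu$ become $(s,t)\mapsto(s+\ln2,t)$ and $(s,t)\mapsto(s+\ln|\mu|,t+\arg\mu)$ on the cylinder $\mathbb R\times(\mathbb R/2\pi\mathbb Z)$; fixing a value of $\arg\mu$ I set
\[
h\colon e^{s+it}\longmapsto\exp\!\Big(\tfrac{\ln|\mu|}{\ln2}\,s+i\big(t+\tfrac{\arg\mu}{\ln2}\,s\big)\Big),\qquad h(0)=0,\quad h(\infty)=\infty .
\]
The right-hand side does not change when $t$ is replaced by $t+2\pi$, so $h$ is well defined; it is a continuous bijection of the compact space $\hat{\mathbb C}$ onto itself, hence a homeomorphism (continuity at $0$ and $\infty$ uses $\ln|\mu|/\ln2>0$); and $h\circ m_2=m_\mu\circ h$ by a direct computation. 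So all loxodromic transformations form one topological-conjugacy class.

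For (i)$\,\Rightarrow\,$(ii): assume $f,g$ are topologically conjugate with $\tr M_f\ne\pm\tr M_g$, and suppose for contradiction that $\tr M_f\in[-2;2]$ (the case $\tr M_g\in[-2;2]$ being symmetric). The type of a nonidentity transformation --- parabolic ($\tr M=\pm2$; exactly one fixed point), loxodromic ($\tr M\notin[-2;2]$; two fixed points), elliptic ($\tr M\in(-2;2)$; two fixed points) --- is invariant under topological conjugacy: the number of fixed points is preserved, and the property ``$f^{\,n}(p)$ converges in $\hat{\mathbb C}$ for every $p$ with $f(p)\ne p$'' holds for parabolic and loxodromic maps but for no elliptic map (for $m_\mu$ with $|\mu|=1$ a nonfixed orbit stays on a circle $|z|=\mathrm{const}$, whereas a convergent orbit of a homeomorphism must converge to a fixed point). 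Hence $f$ and $g$ share a type; it is not loxodromic (we assumed $\tr M_f\in[-2;2]$) and not parabolic (else $\tr M_f=\pm2=\pm\tr M_g$), so both are elliptic, say $f=m_\mu$, $g=m_\nu$ with $|\mu|=|\nu|=1$, and it remains to prove that topologically conjugate elliptic maps satisfy $\mu\in\{\nu,\overline\nu\}$ --- which forces $\tr M_f=\pm\tr M_g$ (both traces being real), again a contradiction.

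This elliptic case is the heart of the matter, and the hard part. Let $H$ be a homeomorphism with $H\circ m_\nu=m_\mu\circ H$; it permutes $\{0,\infty\}$, so after composing if necessary with $z\mapsto 1/z$ (which conjugates $m_\mu$ to $m_{1/\mu}$ and leaves the set $\{\nu,\overline\nu\}$ invariant) I may assume $H$ fixes $0$ and $\infty$, hence restricts to a self-homeomorphism of $\mathbb C\setminus\{0\}$. I would lift $H$ through the universal covering $\exp\colon\mathbb C\to\mathbb C\setminus\{0\}$ to $\widetilde H\colon\mathbb C\to\mathbb C$. Since $H$ is a homeomorphism, $\widetilde H(w+2\pi i)=\widetilde H(w)+2\pi i\varepsilon$ with $\varepsilon=\pm1$; and since $w\mapsto w+i\sigma$ and $w\mapsto w+i\rho$ are lifts of $m_\nu$ and $m_\mu$ (where $\nu=e^{i\sigma}$, $\mu=e^{i\rho}$), the conjugacy lifts to $\widetilde H(w+i\sigma)=\widetilde H(w)+i\rho+2\pi ik$ for a fixed $k\in\mathbb Z$. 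Iterating the latter $n$ times, taking imaginary parts, and using that $\varphi(w):=\operatorname{Im}\widetilde H(w)-\varepsilon\operatorname{Im}w$ is $2\pi$-periodic in $\operatorname{Im}w$ and therefore bounded on each vertical line, I get $|n|\,\bigl|\rho+2\pi k-\varepsilon\sigma\bigr|\le 2\sup_t\bigl|\varphi(\operatorname{Re}w+it)\bigr|$ for all $n\in\mathbb Z$, which forces $\rho\equiv\varepsilon\sigma\pmod{2\pi}$, i.e.\ $\mu=\nu^{\varepsilon}\in\{\nu,\overline\nu\}$. The main obstacle is exactly this: one needs a topological invariant that detects the rotation angle $\arg\mu$ up to sign, and the natural one --- a rotation number extracted from a lift to the universal cover of $\hat{\mathbb C}\setminus\{0,\infty\}$ --- requires controlling the bounded ``periodic part'' $\varphi$ of the lift and handling the two ways $H$ may act on the pair of fixed points; by contrast the algebra of (ii)--(iv) and the explicit homeomorphism collapsing the loxodromic maps onto $z\mapsto 2z$ are routine.
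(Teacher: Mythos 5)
Your proof is correct, but at the one point where something genuinely topological must be established it takes a different route from the paper's. The paper never constructs a conjugating homeomorphism for the loxodromic/hyperbolic case and never directly proves that distinct elliptic rotations are topologically inequivalent: it reduces, via the canonical form \eqref{aim}, to the maps $z\mapsto az$ on $\hat{\mathbb C}$ (Lemma~\ref{kl}) and then invokes Theorem~\ref{klas_m}(ii), the topological classification of linear operators on $\mathbb C^m$ with $m\le 2$, which rests on the cited results of Kuiper--Robbin, Cappell--Shaneson and \cite{Budn-LAA}. You instead open up that black box and prove both halves by hand: the logarithmic-spiral homeomorphism $e^{s+it}\mapsto\exp\bigl(\tfrac{\ln|\mu|}{\ln 2}s+i(t+\tfrac{\arg\mu}{\ln 2}s)\bigr)$ collapses all transformations with trace outside $[-2;2]$ onto $z\mapsto 2z$, and the lift of a conjugating homeomorphism to the universal cover of $\mathbb C\setminus\{0\}$, with its bounded $2\pi$-periodic part $\varphi$, yields the rotation-number rigidity $\mu\in\{\nu,\bar\nu\}$ for elliptic maps; your reduction to a homeomorphism fixing $0$ and $\infty$ by composing with $z\mapsto 1/z$ mirrors Case~2 in the proof of Lemma~\ref{kl}. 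Both computations check out. The algebraic equivalences (ii)$\Leftrightarrow$(iii)$\Leftrightarrow$(iv) and the appeal to Beardon's conjugacy criterion when $\tr M_f=\pm\tr M_g$ coincide with the paper's treatment. What your route buys is a self-contained, elementary proof independent of the nonlinear-similarity literature; what the paper's route buys is brevity and coherence with its surrounding machinery on topological classification of linear and affine operators. Two minor quibbles, neither affecting correctness: what you call ``loxodromic'' when $\tr\notin[-2;2]$ also includes the hyperbolic maps, and in the routine check of (ii)$\Leftrightarrow$(iii) you should state explicitly that when exactly one of $|\lambda|,|\lambda'|$ has modulus $1$ both conditions fail (the trace comparison fails because one trace lies in $[-2;2]\subset\mathbb R$ and the other does not).
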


The canonical form \eqref{aim} is used
in the following definition: a
nonidentity M\"obius transformation is
called
\begin{itemize}
  \item \textit{hyperbolic} if it
      is conjugate to $z\mapsto \mu z$ with $1\ne\mu\in\mathbb R$;

  \item \textit{loxodromic} if it
      is conjugate to $z\mapsto \mu
z$ with $\mu\notin\mathbb R$ and
      $|\mu |\ne 1$;

  \item \textit{elliptic} if it
      is conjugate to $z\mapsto \mu
z$ with $|\mu |=1$ and $\mu \ne 1$;

\item \textit{parabolic} if it is
    conjugate to $z\mapsto z+1$.
\end{itemize}

A canonical form of a
M\"obius transformation for topological conjugacy is easily obtained from the equivalence of (i) and (iv)
in Theorem \ref{klas-drobb}:

\begin{corollary}\label{ipyf}
{\rm(a)} Each {hyperbolic} or
{loxodromic} M\"obius transformation is
topologically conjugate to $z\mapsto
2z$.

{\rm(b)} Each elliptic M\"obius
transformation is topologically
conjugate to $z\mapsto \mu z$ $(|\mu|=1)$ which is uniquely
determined up to replacement of $\mu$  by $\bar \mu$.

{\rm(c)} Each parabolic M\"obius
transformation is topologically
conjugate to  $z\mapsto z+1$.
\end{corollary}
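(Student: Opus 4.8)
The plan is to reduce everything to the equivalence (i)$\Leftrightarrow$(iv) of Theorem~\ref{klas-drobb}, after recording the multipliers of the relevant canonical transformations. By~\eqref{aim} every nonidentity M\"obius transformation $f$ is conjugate to exactly one $m_\mu$, and its multipliers are $\mu_1=\mu$ and $\mu_2=1/\mu$; in particular $m_\mu$ itself has multipliers $\mu$ and $1/\mu$, while $m_1\colon z\mapsto z+1$ has multipliers $1$ and $1$. I will also use that topological conjugacy is an equivalence relation, that conjugate transformations are topologically conjugate, that $1/\mu=\bar\mu$ whenever $|\mu|=1$, and that $m_\mu$ and $m_{1/\mu}$ are conjugate (this is part of~\eqref{aim}, and is realised by $z\mapsto 1/z$).

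For part~(a): a hyperbolic or loxodromic transformation $f$ is conjugate to some $m_\mu$ with $|\mu|\ne1$ (with $\mu$ real in the hyperbolic case and non-real in the loxodromic one), so the two multipliers $\mu$ and $1/\mu$ of $f$ both have modulus different from $1$. The transformation $z\mapsto 2z$ has multipliers $2$ and $1/2$, again of modulus different from $1$. Hence, for every choice of a multiplier of $f$ and a multiplier of $z\mapsto2z$, the first alternative in~(iv) holds, so by Theorem~\ref{klas-drobb} the transformations $m_\mu$ and $z\mapsto2z$ are topologically conjugate; since topological conjugacy is transitive and $f$ is topologically conjugate to $m_\mu$, it follows that $f$ is topologically conjugate to $z\mapsto2z$.

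For part~(b): if $f$ is elliptic, then by definition it is conjugate, and hence topologically conjugate, to some $m_\mu$ with $|\mu|=1$ and $\mu\ne1$; this is the existence assertion. For uniqueness, suppose $f$ is topologically conjugate both to $m_\mu$ and to $m_{\mu'}$ with $|\mu|=|\mu'|=1$; then $m_\mu$ and $m_{\mu'}$ are topologically conjugate, and their multiplier sets are $\{\mu,\bar\mu\}$ and $\{\mu',\bar{\mu}'\}$ respectively (using $1/\mu=\bar\mu$), all of modulus $1$. Applying~(iv) to a multiplier of $m_\mu$ and a multiplier of $m_{\mu'}$, the first alternative is now impossible, so $\mu=\mu'$ or $\mu=\bar{\mu}'$; that is, $\mu'\in\{\mu,\bar\mu\}$. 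Conversely $m_\mu$ and $m_{\bar\mu}=m_{1/\mu}$ are conjugate, hence topologically conjugate, so $\{\mu,\bar\mu\}$ is exactly the invariant, as claimed. Part~(c) is immediate from the definition: a parabolic transformation is conjugate, hence topologically conjugate, to $z\mapsto z+1$.

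I do not expect an essential obstacle, since the corollary is a direct bookkeeping consequence of Theorem~\ref{klas-drobb}. The two points needing care are the computation of a multiplier at a fixed point equal to $\infty$ (through $\lim_{z\to\infty}1/f'(z)$, so that the two multipliers are reciprocal) and the correct matching of the two-element multiplier sets against the three alternatives in~(iv): in parts~(a) and~(c) one uses the vacuous first alternative, whereas the sharpness of the ``up to $\bar\mu$'' statement in~(b) rests on the second and third.
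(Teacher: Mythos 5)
Your proof is correct and follows exactly the route the paper intends: the paper gives no separate argument for this corollary, stating only that it ``is easily obtained from the equivalence of (i) and (iv) in Theorem~\ref{klas-drobb},'' and your bookkeeping with the multiplier sets $\{\mu,1/\mu\}$ (equal to $\{\mu,\bar\mu\}$ when $|\mu|=1$) is precisely that deduction. The two delicate points you flag --- the reciprocal multiplier at the fixed point $\infty$ and matching the multiplier pairs against the three alternatives of (iv) --- are handled correctly.
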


Two linear operators $\mathcal A,\mathcal B:\mathbb C^2\to \mathbb C^2$ are said to be {\it topologically
conjugate} if ${\mathcal B}=h^{-1}{\mathcal A}h$ for some homeomorphism $h:
\mathbb C^2 \to \mathbb C^2$.
Assigning to a M\"obius
transformation $f$ the
linear operator $x\mapsto M_fx$
$(x\in\mathbb C^2)$ determined
up to multiplication by $-1$, we obtain the {\it one-to-one correspondence between M\"obius transformations
on $\hat{\mathbb C}$ and linear operators on $\mathbb C^2$ with
determinant 1 that are determined up to multiplication by $-1$}.
This correspondence preserves the
topological conjugacy in virtue of the
following corollary, which will be
proved in Section \ref{kutdx}.

\begin{corollary}\label{lhp}
The following two conditions are
equivalent for M\"obius
transformations $f$ and $g$:
\begin{itemize}
  \item[\rm(i)] $f$ and $g$ are
      topologically conjugate;

  \item[\rm(ii)] the linear
      operator $x\mapsto M_fx$ on
      $\mathbb C^2$ is
      topologically conjugate to
      $x\mapsto M_gx$ or $x\mapsto
      -M_gx$.
\end{itemize}
\end{corollary}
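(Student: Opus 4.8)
My plan is to derive the corollary from Theorem~\ref{klas-drobb}. Condition~(i) of the corollary is condition~(i) of Theorem~\ref{klas-drobb}, which by that theorem is equivalent to its condition~(iii), so it suffices to show that condition~(ii) of the corollary is equivalent to condition~(iii) of Theorem~\ref{klas-drobb}. Since $\det M_f=\det M_g=1$, the two eigenvalues of each of $M_f$ and $M_g$ lie either both off or both on the unit circle, so condition~(iii) of Theorem~\ref{klas-drobb} amounts to: \emph{(A)}~both $M_f$ and $M_g$ have eigenvalues off the unit circle, or \emph{(B)}~both have eigenvalues on the unit circle and $M_f$ is similar to $M_g$ or to $-M_g$. (In case (B) an eigenvalue $\lambda$ on the unit circle satisfies $\bar\lambda=1/\lambda$, so ``$\lambda=\pm\lambda'$ or $\lambda=\pm\bar\lambda'$ for all eigenvalues $\lambda,\lambda'$'' says precisely that the eigenvalue multiset of $M_f$ coincides with that of $M_g$ or of $-M_g$, which, together with $\det=1$ and, for $\lambda=\pm1$, the parabolic Jordan structure, is the same as $M_f\sim\pm M_g$; mismatches between the elliptic, parabolic and loxodromic/hyperbolic types are then automatically excluded, since in those cases $M_f\not\sim\pm M_g$.) So it suffices to prove that the operator $A_f\colon x\mapsto M_fx$ on $\mathbb C^2$ is topologically conjugate to $A_g\colon x\mapsto M_gx$ or to $-A_g\colon x\mapsto-M_gx$ if and only if (A) or (B) holds.

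For the implication ``(A) or (B) $\Rightarrow$ topological conjugacy of the operators'': if (B) holds then $M_f=P(\pm M_g)P^{-1}$ for some $P\in GL_2(\mathbb C)$, so the homeomorphism $x\mapsto Px$ of $\mathbb C^2$ conjugates $A_f$ to $\pm A_g$. If (A) holds, then, regarding $\mathbb C^2$ as $\mathbb R^4$, each of $A_f$, $A_g$, $-A_g$ is a hyperbolic linear automorphism whose stable and unstable subspaces are complex lines, of real dimension $2$, on which the operator is $\mathbb C$-linear and hence orientation-preserving; by the topological classification of hyperbolic linear automorphisms --- whose invariants are $\dim E^s$, $\dim E^u$ and the orientation types of the restrictions to $E^s$ and $E^u$ (due to Kuiper and Robbin, or, more directly, a consequence of the classical conjugation of an orientation-preserving linear contraction to $\tfrac12\,\mathrm{id}$ by means of an adapted norm and radial coordinates) --- all three operators are topologically conjugate, indeed each is topologically conjugate to the operator $x\mapsto\operatorname{diag}(\sqrt2,1/\sqrt2)\,x$ attached to $m_2(z)=2z$, in accordance with Corollary~\ref{ipyf}(a).

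For the converse, suppose a homeomorphism $h$ of $\mathbb C^2$ conjugates $A_f$ to $A_g$ (the case of $-A_g$ is entirely analogous). Since $h$ is a homeomorphism of $\mathbb R^4$ it is proper, so it carries the set $B(M_f)$ of points with bounded two-sided $M_f$-orbit onto $B(M_g)$. Computing with Jordan forms, $B(M_f)=\{0\}$ when $f$ is hyperbolic or loxodromic, $B(M_f)$ is a complex line (homeomorphic to $\mathbb R^2$) when $f$ is parabolic, and $B(M_f)=\mathbb C^2$ when $f$ is elliptic; these three sets are pairwise non-homeomorphic, so $f$ and $g$ are of the same type. If both are hyperbolic or loxodromic, (A) holds; if both are parabolic, each of $M_f,M_g$ is similar to $\pm\left[\begin{smallmatrix}1&1\\0&1\end{smallmatrix}\right]$, so $M_f\sim\pm M_g$ and (B) holds.

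The remaining case --- $f$ and $g$ both elliptic --- is, I expect, the main obstacle. After conjugating the operators by complex-linear maps we may take $M_f=\operatorname{diag}(e^{i\theta},e^{-i\theta})$ and $M_g=\operatorname{diag}(e^{i\varphi},e^{-i\varphi})$ with $e^{i\theta},e^{i\varphi}\ne\pm1$, and the goal is $e^{2i\theta}=e^{\pm2i\varphi}$, i.e.\ $\mu_f=\mu_g^{\pm1}$, whence $f$ and $g$ are topologically conjugate by Theorem~\ref{klas-drobb}(iv) and (B) follows. If $e^{i\theta}$ is not a root of unity I would use rotation numbers: every nonzero $x$ has $\overline{\{M_f^nx:n\in\mathbb Z\}}$ a topological circle invariant under $M_f$, on which $M_f$ acts as a rotation of rotation number $\pm\theta/2\pi$; as $h$ carries orbit closures to orbit closures it sends such a circle to an $M_g$-orbit circle, and the topological invariance --- up to sign --- of the rotation number of a circle homeomorphism forces $\theta\equiv\pm\varphi\pmod{2\pi}$. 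If $e^{i\theta}$ has finite order $n$, this argument breaks down and genuine low-dimensional topology is needed: $M_f$, being unitary, generates a free linear $\mathbb Z_n$-action on $\mathbb C^2\setminus\{0\}$, $h$ fixes the unique fixed point $0$, and from $h$ one must extract a generator-preserving equivariant homeomorphism between the lens spaces $S^3/\langle M_f\rangle$ and $S^3/\langle M_g\rangle$; the topological rigidity of such actions --- equivalently, the topological invariance of the rotation data at a fixed point of finite period, which rests on the classification of lens spaces via Reidemeister torsion --- then forces $e^{i\varphi}\in\{\pm e^{i\theta},\pm e^{-i\theta}\}$, again giving $e^{2i\theta}=e^{\pm2i\varphi}$. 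The two delicate steps here, passing carefully from topological conjugacy of the operators on $\mathbb C^2$ to an equivariant homeomorphism of lens spaces and then invoking their classification, are where the substance of the proof lies.
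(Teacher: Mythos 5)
Your overall architecture is sound and genuinely different from the paper's. The paper proves Corollary~\ref{lhp} by splitting into cases according to the numbers of fixed points $n(f)$ and $n(g)$, disposing of the mismatched and parabolic cases directly, and then reducing the diagonalizable case to Corollary~\ref{poit}, which is itself a mechanical application of Theorem~\ref{klas_m}(ii) (Budnitska's classification of linear operators on $\mathbb C^2$, which in turn rests on the Kuiper--Robbin/Cappell--Shaneson results quoted in Section~2). You instead translate Theorem~\ref{klas-drobb}(iii) into your conditions (A)/(B) and re-derive the needed classification of the operators from scratch: bounded two-sided orbit sets to separate the hyperbolic/loxodromic, parabolic and elliptic types, the Kuiper--Robbin hyperbolic classification for (A), and rotation numbers plus lens-space rigidity for the elliptic case. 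Your translation of (iii) into (A)/(B) is correct for nonidentity transformations, and the bounded-orbit and hyperbolic arguments are fine. What your route buys is independence from Theorem~\ref{klas_m}(ii) in all but the elliptic case; what it costs is that you must then supply by hand exactly the part of that theorem which is hard.

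And that is where the genuine gap sits: the elliptic case with $e^{i\theta}$ a root of unity is not proved, only described. You correctly identify that one must pass from a conjugating homeomorphism of $\mathbb C^2$ fixing the origin to a generator-preserving equivariant homeomorphism of $S^3$ (or of the lens-space quotients) and then invoke de Rham-type rigidity of free linear cyclic actions via Reidemeister torsion, and you explicitly say these two steps are ``where the substance of the proof lies'' --- but you do not carry them out. Passing from an equivariant homeomorphism of $\mathbb C^2\setminus\{0\}$ (equivalently of $L\times\mathbb R$) to a homeomorphism of the compact lens spaces is not formal. This is precisely the ``periodic hypothesis'' \eqref{4kj8} that the paper quarantines inside Theorem~\ref{klas_m}(ii) and attributes to Cappell--Shaneson in dimension $\le 5$; since that theorem is available in the paper and covers unit-circle eigenvalues including roots of unity, the cleanest repair is simply to replace your entire elliptic discussion by a citation of Corollary~\ref{poit} (or Theorem~\ref{klas_m}(ii) directly), as the paper does. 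A second, smaller omission: the corollary is stated for arbitrary M\"obius transformations, but Theorem~\ref{klas-drobb} applies only to nonidentity ones, so you need a sentence handling the case where $f$ or $g$ is the identity (the paper's Cases~1 and~4); this is easy --- the identity is distinguished from every nonidentity transformation, and $\pm I_2$ from every other $M_g$, by the fixed-point sets --- but it must be said.
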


\section{Topological classification
of linear operators}

In this section, we recall some results
of \cite{Kuip-Robb,Robb,Budn-LAA} about
topological classification of linear
operators (they were extended to affine
operators in \cite{bud_ukr,Budn-LAA,bud-bud}), which will be used in the next sections.

For each square complex matrix
$A=[a_{ij}]$, we define the matrix
$\bar A=[\bar a_{ij}]$ whose entries are the complex conjugates of the entries of $A$, and construct a decomposition of $A$ into a direct sum of square matrices
\begin{equation}\label{+-0}
  S^{-1}AS=A_0\oplus A_{01}\oplus A_1\oplus A_{1\infty}\quad\text{($S$ is a nonsingular matrix),}
\end{equation}
in which
all eigenvalues $\lambda$ of
$A_0$ (respectively, of $A_{01}$,
$A_1$, and $A_{1\infty}$) satisfy the
condition
$\lambda =0$ (respectively, $0<|\lambda| <1,$ $|\lambda| =1,$ and $|\lambda| >1$).

The assertion (i) of the following
theorem was proved in
\cite{Kuip-Robb} (see also
\cite{Robb}); the assertion (ii) was
proved  by the first author in \cite[Theorem
2.2]{Budn-LAA}.

\begin{theorem} \label{klas_m1}
Let $f(x)=Ax$ and $g(x)=Bx$ be linear
operators over $\mathbb {F=R}$ or $\mathbb
C$ without eigenvalues that are roots
of unity, and let
$A_0,A_{01}, A_1, A_{1\infty}$ and
$B_0,B_{01}, B_1, B_{1\infty}$ be constructed
by $A$ and $B$ as in \eqref{+-0}.

\begin{itemize}
  \item[{\rm(i)}] If $\mathbb
      {F=R}$, then $f$ and $g$ are
      topologically conjugate if
      and only if
\begin{equation}\label{topol-matr-r}
\!\!\!\!\!\!\!\!\begin{matrix}
A_0\text{ is similar to } B_0,\ \
\size A_{01} =\size B_{01}, \ \ \det (A_{01} B_{01})>0,\\
A_1\text{ is similar to } B_1,\ \
\size A_{1\infty} =\size B_{1\infty}, \ \ \det (A_{1\infty} B_{1\infty})>0.
\end{matrix}\!\!\!\!\!
\end{equation}

  \item[{\rm(ii)}] If $\mathbb
      {F=C}$, then $f$ and $g$ are
      topologically conjugate if
      and only if
      \begin{equation*}\label{topol-matr-C1}
\begin{matrix}
A_0 \text{ is similar to }
B_0,\quad
\size A_{01} =\size B_{01}, \\
A_1\oplus \bar A_1 \text{ is
similar to } B_1\oplus \bar
B_1,\quad \size A_{1\infty} =\size
B_{1\infty}.
\end{matrix}
\end{equation*}
\end{itemize}
\end{theorem}

Two linear operators $f$, $g: \mathbb
R^n \to \mathbb R^n$ are said to be
\emph{conjugate} if there is a
linear bijection  $h: \mathbb
R^n \to \mathbb R^n$ such that
$g=h^{-1}f h$.

An operator $f$ is \emph{periodic} if
$f^k$ is the identity for some natural
$k$. Kuiper and Robbin~\cite{Kuip-Robb} proved that if the hypothesis
\begin{equation}\label{4kj8}
\parbox{25em}
{two periodic linear operators are
topologically conjugate if and only if
they are conjugate}
\end{equation}
is true, then \eqref{topol-matr-r} are necessary
and sufficient conditions of
topological conjugacy for all linear
operators.
Cappell and
Shaneson~\cite{Capp-2th-nas-n<=6,Capp-big-n<6,
Capp-Contemp-n<6,Capp-n=6} proved \eqref{4kj8}
for all linear operators on $\mathbb R^n$
with $n< 6$.

These results ensure the assertion (i)
of the following theorem. The assertion
(ii) is proved as Theorem
2.2 in \cite{Budn-LAA}.

\begin{theorem}\label{klas_m}
Let $f(x)=Ax$ and $g(x)=Bx$ be linear
operators on $V=\mathbb
      R^{m}$ or $\mathbb
      C^{m}$; let
$A_0,A_{01}, A_1, A_{1\infty}$ and
$B_0,B_{01}, B_1, B_{1\infty}$ be constructed
by $A$ and $B$ as in \eqref{+-0}.

\begin{itemize}
  \item[{\rm(i)}] Suppose
      $V=\mathbb R^{m}$ with $m\le
      5$; then $f$ and $g$ are
      topologically conjugate if
      and only if
\begin{equation*}\label{topol-matr-R}
\begin{matrix}
A_0 \text{ is similar to } B_0,\quad
\size A_{01} =\size B_{01}, \quad \det (A_{01} B_{01})>0,\\
A_1 \text{ is similar to } B_1,\quad
\size A_{1\infty} =\size B_{1\infty}, \quad \det (A_{1\infty} B_{1\infty})>0.
\end{matrix}
\end{equation*}

 \item[{\rm(ii)}] Suppose
     $V=\mathbb C^m$ with $m\le 2$;
     then $f$ and $g$ are
     topologically conjugate if and
     only if
\begin{equation}\label{oibi}
\begin{matrix}
A_0 \text{ is similar to }
B_0,\quad
\size A_{01} =\size B_{01}, \\
A_1\oplus \bar A_1 \text{ is
similar to } B_1\oplus \bar
B_1,\quad \size A_{1\infty} =\size
B_{1\infty}.
\end{matrix}
\end{equation}
\end{itemize}
\end{theorem}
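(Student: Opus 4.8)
The plan is to derive (i) from Theorem~\ref{klas_m1}(i) by removing its hypothesis on roots of unity, and then to obtain (ii) from (i) by realification. For (i), note first that, since every root of unity lies on the unit circle, the hypothesis of Theorem~\ref{klas_m1}(i) constrains only the central blocks $A_1$ and $B_1$; what must be added is therefore the case in which these blocks have eigenvalues that are roots of unity. I would invoke the Kuiper--Robbin analysis, which reduces the topological classification to the individual blocks of the decomposition~\eqref{+-0}: $f$ and $g$ are topologically conjugate if and only if $A_0$ is similar to $B_0$ and each of the pairs $(A_{01},B_{01})$, $(A_1,B_1)$, $(A_{1\infty},B_{1\infty})$ is topologically conjugate. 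In this reduction a nilpotent operator is classified up to topological conjugacy by its similarity class; an invertible linear contraction of $\mathbb R^k$ is classified by $k$ together with the sign of its determinant, and the expanding blocks reduce to this by passing to inverses --- yielding the size and determinant conditions in~\eqref{topol-matr-r}; and for the central blocks Kuiper and Robbin show that topological conjugacy coincides with similarity exactly when the hypothesis~\eqref{4kj8} holds in the dimension involved. Since $m\le5$ and Cappell and Shaneson proved~\eqref{4kj8} for operators on $\mathbb R^k$ with $k\le5$, the central blocks too are classified by similarity in our range, and assembling the per-block criteria gives~\eqref{topol-matr-r}.

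For (ii), I would regard $A,B\colon\mathbb C^m\to\mathbb C^m$ as real operators $A_{\mathbb R},B_{\mathbb R}$ on $\mathbb R^{2m}$; this changes neither the maps nor the ambient topology, so $f$ and $g$ are topologically conjugate over $\mathbb C^m$ if and only if $A_{\mathbb R}$ and $B_{\mathbb R}$ are topologically conjugate over $\mathbb R^{2m}$, and $2m\le4$, so part (i) applies. Realification preserves similarity and direct sums and replaces the eigenvalues of a block by those eigenvalues together with their complex conjugates, so it moves no block across the four modulus classes; hence the blocks of $A_{\mathbb R}$ in~\eqref{+-0} are, up to real similarity, $(A_0)_{\mathbb R}$, $(A_{01})_{\mathbb R}$, $(A_1)_{\mathbb R}$, $(A_{1\infty})_{\mathbb R}$. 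It then remains to rewrite the conditions furnished by part (i) for $A_{\mathbb R},B_{\mathbb R}$ in terms of $A$ and $B$, using the standard fact that $C_{\mathbb R}$ is similar to $D_{\mathbb R}$ over $\mathbb R$ if and only if $C\oplus\bar C$ is similar to $D\oplus\bar D$ over $\mathbb C$. The $0$-block condition then becomes the requirement that $A_0\oplus\bar A_0$ be similar to $B_0\oplus\bar B_0$, which collapses to the requirement that $A_0$ be similar to $B_0$ since a nilpotent matrix is similar to its conjugate and the Jordan form is unique; the central-block condition becomes the requirement that $A_1\oplus\bar A_1$ be similar to $B_1\oplus\bar B_1$, which does not collapse further; the size conditions become $\size A_{01}=\size B_{01}$ and $\size A_{1\infty}=\size B_{1\infty}$; and the determinant conditions hold automatically because $\det M_{\mathbb R}=|\det M|^2>0$ for every invertible complex matrix $M$. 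These are precisely the conditions~\eqref{oibi}; this is the argument carried out in \cite[Theorem~2.2]{Budn-LAA}.

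The step I expect to be the genuine obstacle --- and the reason for the dimension bounds $m\le5$ in (i) and $m\le2$ in (ii) --- is the topological classification of periodic linear operators, i.e.\ the truth of~\eqref{4kj8}: this lies deep and is known not to hold in all dimensions, so it cannot be circumvented by elementary arguments. By contrast, the Kuiper--Robbin reduction to the individual blocks, the classification of invertible linear contractions, and the bookkeeping in the realification step are classical or routine.
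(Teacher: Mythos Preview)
Your plan is correct and matches the paper's treatment: the paper does not give a self-contained proof but simply records that (i) follows from the Kuiper--Robbin reduction together with the Cappell--Shaneson verification of~\eqref{4kj8} in dimensions below~$6$, and that (ii) is \cite[Theorem~2.2]{Budn-LAA}, whose proof is exactly the realification argument you outline. Your write-up fleshes out the block-by-block analysis and the translation of the real conditions into~\eqref{oibi} more explicitly than the paper does, but the route is the same.
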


The following corollary is used in Section \ref{kutdx} for the
proof of Corollary \ref{lhp}.

\begin{corollary}\label{poit}
Let $f(x)=Ax$ and $g(x)=Bx$ be two
nonidentity linear operators on
$\mathbb C^2$ whose matrices $A$ and
$B$ have determinant $1$ and are
diagonalizable $($i.e., their Jordan
forms are diagonal$)$. Let $\lambda$
and $\lambda'$ be eigenvalues of $A$
and $B$, respectively. Then $f$ and $g$
are topologically conjugate if and only
if $|\lambda|,|\lambda'| \neq 1$, or
$\lambda=\lambda',$ or $\lambda=\bar
{\lambda}'. $
\end{corollary}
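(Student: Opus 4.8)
The plan is to deduce Corollary~\ref{poit} from Theorem~\ref{klas_m}(ii) by unwinding what the decomposition \eqref{+-0} looks like for a diagonalizable $2\times 2$ matrix of determinant $1$. Write $\lambda_1,\lambda_2$ for the eigenvalues of $A$; since $\det A=\lambda_1\lambda_2=1$ we have $\lambda_2=1/\lambda_1$, so either both eigenvalues lie on the unit circle ($|\lambda_1|=|\lambda_2|=1$) or exactly one lies strictly inside and the other strictly outside. First I would observe that, because $f$ is not the identity and $A$ is diagonalizable with no eigenvalue equal to $0$, the case analysis splits cleanly: if $|\lambda|=1$ then $A$ is similar to a diagonal matrix all of whose eigenvalues have modulus $1$, so in \eqref{+-0} we get $A=A_1$ (a $2\times 2$ block) and $A_0=A_{01}=A_{1\infty}=\varnothing$; if $|\lambda|\ne 1$ then one eigenvalue has modulus $<1$ and the other $>1$, so $A_{01}$ and $A_{1\infty}$ are each $1\times 1$ and $A_0=A_1=\varnothing$. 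The same dichotomy applies to $B$ via $\lambda'$.

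Next I would feed these normal forms into the four conditions \eqref{oibi}. In the ``off-circle'' case for both operators, $A_0,A_1,B_0,B_1$ are all empty (so the similarity conditions on them are vacuous) and $\size A_{01}=\size B_{01}=1=\size A_{1\infty}=\size B_{1\infty}$ automatically; hence \eqref{oibi} holds with no further constraint, matching the ``$|\lambda|,|\lambda'|\ne 1$'' clause of the corollary. If exactly one of the two operators is off the circle and the other on it, the sizes disagree ($\size A_{01}$ is $1$ versus $0$, or $\size A_1$ differs), so \eqref{oibi} fails and indeed $f,g$ are not topologically conjugate, again consistent with the claim since the disjunction in the corollary also fails (one of $|\lambda|,|\lambda'|$ equals $1$ while the other does not, and $\lambda=\lambda'$ or $\lambda=\bar\lambda'$ is impossible as they have different moduli). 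The remaining case is when both lie on the unit circle: then the only nontrivial condition in \eqref{oibi} is that $A_1\oplus\bar A_1$ be similar to $B_1\oplus\bar B_1$. Here $A_1$ is (similar to) $\operatorname{diag}(\lambda,1/\lambda)=\operatorname{diag}(\lambda,\bar\lambda)$ since $|\lambda|=1$ forces $1/\lambda=\bar\lambda$; thus $A_1\oplus\bar A_1$ has eigenvalue multiset $\{\lambda,\bar\lambda,\lambda,\bar\lambda\}$, and likewise $B_1\oplus\bar B_1$ has $\{\lambda',\bar\lambda',\lambda',\bar\lambda'\}$. Two diagonalizable matrices are similar iff their eigenvalue multisets coincide, so the condition becomes $\{\lambda,\bar\lambda\}=\{\lambda',\bar\lambda'\}$, i.e. $\lambda=\lambda'$ or $\lambda=\bar\lambda'$ — exactly the second and third clauses of the corollary.

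The main obstacle, such as it is, is bookkeeping rather than depth: one must be careful that the decomposition \eqref{+-0} is taken up to the similarity $S^{-1}AS$, so ``$A_1$'' is only defined up to similarity and ``$\size$'' of an empty block is $0$, and one must check that in the on-circle case $A$ genuinely has no eigenvalue that is a root of unity is \emph{not} needed here because Theorem~\ref{klas_m}(ii) carries no such hypothesis (unlike Theorem~\ref{klas_m1}); the $m\le 2$ hypothesis of Theorem~\ref{klas_m}(ii) is satisfied since we are on $\mathbb C^2$. I would also note explicitly that $\lambda$ in the statement is ``any eigenvalue,'' so the asserted equivalence must be symmetric under $\lambda\mapsto 1/\lambda=\bar\lambda$ (when $|\lambda|=1$) and under $\lambda\mapsto 1/\lambda$ (when $|\lambda|\ne 1$); in the off-circle case the condition is invariant because it is vacuous, and in the on-circle case $\{\lambda,\bar\lambda\}$ is manifestly invariant under $\lambda\mapsto\bar\lambda$. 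With these checks in place the four bullet conditions of \eqref{oibi} translate term-by-term into the stated disjunction, completing the proof.
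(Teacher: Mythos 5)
Your proposal is correct and follows essentially the same route as the paper: reduce to the Jordan (diagonal) form, identify the blocks of the decomposition \eqref{+-0} according to whether the eigenvalues lie on the unit circle, and translate the conditions \eqref{oibi} of Theorem~\ref{klas_m}(ii) into the stated disjunction. Your handling of the on-circle case is in fact slightly more careful than the paper's (you correctly treat $A_1\oplus\bar A_1$ as a $4\times 4$ matrix and compare eigenvalue multisets), but the argument is the same.
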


\begin{proof} The conjugacy
of linear operators implies their
topological conjugacy, so we can
suppose that the matrices of $f(x)=Ax$
and $g(x)=Bx$ are given in their Jordan
forms:
$$
A=\begin{bmatrix}
\lambda  & 0 \\
0 & \lambda ^{-1}
\end{bmatrix},\quad
B=\begin{bmatrix}
\lambda'  & 0 \\
0 & \lambda^{\prime -1}
\end{bmatrix},
\qquad \lambda,\lambda' \ne \pm1,0.
$$

The following 4 cases are possible:
\begin{description}
  \item[\it Case 1: $|\lambda|\ne
      1$ and $|\lambda'|\ne 1$.]
      Then in notation \eqref{+-0}
      \[{A}_{01}\oplus{A}_{1\infty}=
      [\lambda]\oplus[\lambda^{-1}],\qquad
      {B}_{01}\oplus{B}_{1\infty}=
      [\lambda']\oplus[\lambda'^{-1}].\]
      By Theorem~\ref{klas_m}(ii),
      $f$ and $g$ are topologically
      conjugate.

   \item[\it Case 2:
       $|\lambda|=|\lambda'|=1$.]
       Then \[{A}_{1}\oplus \bar
       {A}_{1}=[\lambda]\oplus
[\bar\lambda],\qquad {B}_{1}\oplus
\bar {B}_{1}=[\lambda']\oplus
[\bar{\lambda}'].\] By
Theorem~\ref{klas_m}(ii), $f$ and
$g$ are topologically conjugate if
and only if $\lambda=\lambda'$ or
$\lambda=\bar {\lambda}'$.

 \item[\it Case 3: $|\lambda|=1$
     and $|\lambda'|\ne 1$.] Then
      \[
{A}_{1}\oplus \bar
{A}_{1}=[\lambda]\oplus
[\bar\lambda],\qquad
{B}_{01}\oplus{B}_{1\infty}=
[\lambda']\oplus[\lambda'^{-1}].\]
By Theorem~\ref{klas_m}(ii), $f$
and $g$ are not topologically
conjugate.

 \item[\it Case 4: $|\lambda|\ne 1$
     and $|\lambda'|= 1$.] Then $f$
     and
$g$ are not topologically
conjugate.
\end{description}
\end{proof}

\begin{lemma}\label{kl}
M\"obius
transformations $f(z)=az$ and $g(z)=bz$ are topologically
conjugate if and only if
\begin{equation}\label{dwj}
\text{$|a|,|b|\ne 1$,\quad or
$a=b$,\quad or
$a=\bar{b}$.}
\end{equation}
\end{lemma}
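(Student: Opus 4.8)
The plan is to reduce the statement about the Möbius transformations $f(z)=az$ and $g(z)=bz$ to the corresponding statement about the linear operators $x\mapsto M_fx$ and $x\mapsto M_gx$ on $\mathbb C^2$, and then apply Corollary 2.3. First I would write down the matrices: since $f(z)=az$ corresponds (up to a nonzero scalar) to $\operatorname{diag}(a,1)$, the normalized matrix with determinant $1$ is $M_f=\operatorname{diag}(\sqrt a,1/\sqrt a)$, and similarly $M_g=\operatorname{diag}(\sqrt b,1/\sqrt b)$. These are diagonalizable with determinant $1$, so Corollary 2.3 applies with eigenvalues $\lambda=\sqrt a$ (the other eigenvalue being $1/\sqrt a$) and $\lambda'=\sqrt b$.

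Next I would translate the criterion of Corollary 2.3, stated in terms of $\lambda,\lambda'$, back into a criterion in terms of $a,b$. Observe that $|\lambda|\ne 1 \iff |a|\ne 1$ and likewise for $b$; and the conditions ``$\lambda=\lambda'$ or $\lambda=\bar\lambda'$'' — keeping in mind that in Corollary 2.3 $\lambda'$ may be taken to be \emph{either} eigenvalue of $M_g$, i.e.\ $\sqrt b$ or $1/\sqrt b$, and similarly for $\lambda$ — unwind to ``$a=b$ or $a=\bar b$''. Here one uses that $\sqrt a=\pm\sqrt b$ is equivalent to $a=b$, that $\sqrt a=\pm\overline{\sqrt b}$ is equivalent to $a=\bar b$, and that replacing $\sqrt b$ by $1/\sqrt b$ corresponds to replacing $b$ by $1/b$, which does not enlarge the relation because $|a|=|b|=1$ forces $1/b=\bar b$ in the elliptic case (so $a=1/b$ already appears as $a=\bar b$). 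Thus \eqref{dwj} is exactly the image of the Corollary 2.3 criterion.

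Finally, I must be careful about one gap: Corollary 2.3 speaks of topological conjugacy of \emph{linear operators on $\mathbb C^2$}, whereas the lemma is about topological conjugacy of \emph{Möbius transformations on $\hat{\mathbb C}$}. Bridging this is precisely the content of Corollary 1.3 (equivalence of (i) and (ii) there), but Corollary 1.3 is proved \emph{later} in Section 3, and in fact its proof presumably relies on this lemma — so I cannot invoke it here without circularity. The honest route, and the one I expect to be the main obstacle, is therefore to prove the ``if'' direction of \eqref{dwj} by exhibiting explicit homeomorphisms of $\hat{\mathbb C}$ (for $|a|,|b|\ne 1$ one conjugates both to $z\mapsto 2z$ via a radial homeomorphism fixing $0$ and $\infty$; for $|a|=|b|=1$ conjugacy already gives the result when $a=b$, and complex conjugation $z\mapsto\bar z$ handles $a=\bar b$), and to prove the ``only if'' direction by transferring a topological conjugacy $h$ of the transformations to the operators — lifting $h:\hat{\mathbb C}\to\hat{\mathbb C}$ to a homeomorphism of $\mathbb C^2$ (or of $\mathbb C^2\setminus\{0\}$, then extending) intertwining $M_f$ and $\pm M_g$ — and then quoting Corollary 2.3 to get the non-conjugacy when $|a|=1\ne|b|$ and the dichotomy $a=b$ or $a=\bar b$ when $|a|=|b|=1$. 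The subtle point throughout is the sign ambiguity $M_g$ vs.\ $-M_g$ and the eigenvalue ambiguity $\lambda'$ vs.\ $1/\lambda'$; I would check that neither introduces spurious pairs, using $|a|=|b|=1$ in the elliptic case as above, so that the final list is precisely \eqref{dwj}.
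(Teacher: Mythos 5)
You correctly spotted the circularity in your first plan (Corollary~\ref{lhp} is proved later and depends on this lemma), but your fallback plan has a genuine gap in the ``only if'' direction. You propose to lift the conjugating homeomorphism $h:\hat{\mathbb C}\to\hat{\mathbb C}$ to a homeomorphism of $\mathbb C^2$ intertwining $M_f$ and $\pm M_g$ and then quote the $\mathbb C^2$ classification; but the existence of such a lift is essentially the hard implication of Corollary~\ref{lhp} itself, and you give no construction --- there is no canonical way to promote a conjugacy on $\mathbb{CP}^1$ to one on $\mathbb C^2$ compatible with the linear actions, so this step is both unjustified and circular in exactly the way you were trying to avoid. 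The paper's move is much more economical and avoids $\mathbb C^2$ entirely: since $h$ must permute the fixed-point set $\{0,\infty\}$ of the pair $f,g$, one may (after composing with $z\mapsto 1/z$, which replaces $a$ by $1/a$) assume $h(\infty)=\infty$ and $h(0)=0$; then $h$ restricts to a homeomorphism of $\mathbb C$ conjugating the \emph{one-dimensional} linear operators $z\mapsto az$ and $z\mapsto bz$, and Theorem~\ref{klas_m}(ii) applied with $m=1$ yields \eqref{dsy} (with $a$ possibly replaced by $1/a$), hence \eqref{dwj}. The same one-dimensional citation handles the ``if'' direction, so no explicit homeomorphisms need to be built.

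Your explicit constructions for the ``if'' direction also do not work as stated. A \emph{radial} homeomorphism $re^{i\theta}\mapsto\rho(r)e^{i\theta}$ commutes with all rotations, so it cannot conjugate $z\mapsto az$ with $a\notin\mathbb R_{>0}$ (the loxodromic case, e.g.\ $a=2i$) to $z\mapsto 2z$; one needs a spiral (logarithmic-coordinate) homeomorphism or, again, the citation of Theorem~\ref{klas_m}(ii) in dimension one. Moreover, when $|a|<1$ no homeomorphism fixing both $0$ and $\infty$ can conjugate $z\mapsto az$ to $z\mapsto 2z$, since $0$ is attracting for the former and repelling for the latter and this is a topological invariant; the inversion $z\mapsto 1/z$ (equivalently, passing to $g^{-1}$) is needed here, exactly as the paper does when $|a|<1<|b|$. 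These points are fixable, but as written the plan does not close.
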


\begin{proof} $\Longleftarrow$. Suppose that $f$
and $g$ satisfy \eqref{dwj}.

If $f$ and $g$ satisfy
\begin{equation}\label{dsy}
|a|,|b|<1,
\quad\text{or }|a|,|b|>1,
\quad\text{or } a=b,
\quad\text{or } a=\bar b,
\end{equation}
then by Theorem~\ref{klas_m}(ii) the
linear mappings $z\mapsto az$ and $z
\mapsto bz$ on $\mathbb C$ are
topologically conjugate via some
homeomorphism $\eta: \mathbb C  \to
\mathbb C$, and so $f$ and $g$ are
topologically conjugate via the
homeomorphism $h:\hat{\mathbb C} \to
\hat{\mathbb C}$ defined as follows:
$h(z):=\eta(z)$ if $z \in \mathbb C$
and $h(\infty):=\infty$.

If $f$ and $g$ do not satisfy
\eqref{dsy} (but satisfy \eqref{dwj}),
then either $|a|<1$ and $|b|>1$, or
$|a|>1$ and $|b|<1$. Suppose that
$|a|<1$ and $|b|>1$. Then $|1/b|<1$ and
by \eqref{dsy} $f$ is topologically
conjugate to $g^{-1}(z)=(1/b)z$, which
is topologically conjugate to $g$ via
the homeomorphism $z\mapsto 1/{z}$ on
$\hat{\mathbb C}$.

$\Longrightarrow$. Let M\"obius
transformations $f(z)=az$ and $g(z)=bz$
on $\hat{\mathbb C}$ be topologically
conjugate; that is, there exists a
homeomorphism ${h}:\hat{\mathbb C} \to
\hat{\mathbb C}$ such that
\begin{equation}\label{tdf}
{h}g(z)= f {h}(z)\qquad \text{for all }z\in \hat{\mathbb C}.
\end{equation}
Since ${h}$ transforms all fixed points
of $g$ to all fixed points of $f$ and
their fixed points  are  $0$ and
$\infty$, the following two cases are
possible.

\begin{description}
 \item[\it Case 1:
     ${h}(\infty)=\infty$ and
${h}(0)=0$.] By~\eqref{tdf}, the
     linear operators $z\mapsto az$
     and $z \mapsto bz$ on $\mathbb
     C$ (which are the restrictions
     of $f$ and $g$ to $\mathbb C$)
     are topologically conjugate
     via the homeomorphism that is
     the restriction of $h$ to
     $\mathbb C$.
     Theorem~\ref{klas_m}(ii)
     ensures that
\begin{equation}\label{dgj}
|a|,|b|<1,\quad\text{or }
|a|,|b|>1,\quad
\text{or } a=b,\quad
\text{or } a=\bar b.
\end{equation}

 \item[\it Case 2: ${h}(\infty)=0$
     and ${h}(0)=\infty$.] The
     M\"obius transformations
     $f^{-1}(z)=(1/a)z$ and
     $g(z)=bz$ are topologically
     conjugate via
     ${h_1}:=\varphi{h}$ in which
     $\varphi(z):=1/z$. Since
     ${h}_1(\infty)=\infty$, we
     have \eqref{dgj} in which $a$
     is replaced by $1/a$.
\end{description}
In both the cases, $a$ and $b$ satisfy
\eqref{dwj}.
\end{proof}

\section{Proof of Theorem~\ref{klas-drobb}}
\label{s3}


In this section, we denote by $f$ and
$g$ two nonidentity M\"obius
transformations, by $\lambda$ and
$\lambda'$ arbitrary eigenvalues of
$M_f$ and $M_g$, respectively, and by
\begin{equation}\label{lir}
\text{$n(f)$ and
$n(g)$ the numbers of fixed points of $f$ and $g$.}
\end{equation}
Recall that the number of fixed points of
any nonidentity M\"obius transformation
is equal to 1 or 2.

(i) $\Longleftrightarrow $ (iv).
Suppose (i) holds. Then $n(f)=n(g)$.
Since $f$ and $g$ are not the identity,
the following two cases are possible.
\begin{description}
\item[{\it Case 1: $n(f)=n(g)=1$.}]
    By \eqref{aim}, $f$ and $g$ are
    conjugate to $m_1(z)= z+1$,
    whose multiplier is $1$. This
    ensures (iv).

\item[{\it Case 2: $n(f)=n(g)=2$.}]
    Let $\mu,\nu\notin\{0,1\}$ be
    multipliers of $f$ and $g$,
    respectively. By \eqref{aim},
    $f$ and $g$ are conjugate to
    $m_{\mu}(z)=\mu z$ and
    $m_{\nu}(z)=\nu z$, whose fixed
    points are $0$ and $\infty$.
    Lemma~\ref{kl} ensures (iv).

\end{description}
Thus $\rm(i)\Longrightarrow \rm(iv)$.
The converse arguments give
$\rm(i)\Longleftarrow \rm(iv)$.


$\rm(iii) \Longleftrightarrow\rm(iv)$.
By \eqref{aim} and \eqref{feof}, if
$\mu$ is a multiplier of $f$, then $f$
is conjugate to $m_{\mu}(z)=\mu z$
$(\mu\neq 0,1)$ or $m_{1}(z)= z+1$
$(\mu=1)$, and so $M_f$ is similar to
\[
M_{m_{\mu}}=\pm\frac 1{\sqrt{\mu}}
\begin{bmatrix}
\mu & 0 \\
 0  & 1 \\
\end{bmatrix}
=\pm\begin{bmatrix}
\sqrt{\mu} & 0 \\
 0         & 1/\sqrt{\mu}
\end{bmatrix}\quad (\mu \ne 0,1)
\]
or
\begin{equation*}\label{mcv}
M_{m_{\mu}}= \pm
\begin{bmatrix}
 1 & 1 \\
 0 & 1 \\
\end{bmatrix}\quad (\mu =1).
\end{equation*}

Therefore, the matrix $M_f$ has  an eigenvalue $\lambda$ that is equal to
\begin{equation}\label{sss}
\sqrt{\mu}\ \ \text{ or }\  -\sqrt{\mu} \quad (\mu \ne 0).
\end{equation}
Analogously, if $\nu$ is a multiplier
of $g$, then $M_g$ has an eigenvalue
$\lambda'$ that is equal to
$\sqrt{\nu}$ or $-\sqrt{\nu}\ (\nu \ne
0)$, which proves the equivalence of
(iii) and (iv).


$\rm(ii) \Longleftrightarrow\rm(iii)$.
Let us prove that
\begin{equation}\label{ajgm}
\parbox{20em}{$|\lambda|=1 \quad
\Longleftrightarrow \quad \tr M_f=\pm \tr M_{m_\mu}
\in [-2;2]$.}
\end{equation}
The equality $\pm\tr M_f=\tr M_{m_\mu}$
follows from the similarity of $M_f$ and
$M_{m_\mu}$. If $|\lambda|=1$, then by
\eqref{ljkt}
\[
\tr M_{m_\mu}=\lambda +\lambda^{-1}=
\lambda +\bar\lambda\in [-2;2].
\]
If $|\lambda|\ne 1$, then
$\lambda^{-1}\ne\bar\lambda$ and $\tr
M_{m_\mu}=\lambda +\lambda^{-1}\notin
[-2;2]$, which proves \eqref{ajgm}. The
following 3 cases are possible.

\begin{description}
\item[{\it Case 1: $|\lambda|\ne 1$
    and $|\lambda'|\ne1$.}] Then
 (iii) holds. By
    \eqref{ajgm}, (ii) holds too.

 \item[\it Case 2: $|\lambda|=1$
     and $|\lambda'|\ne 1$, or
     $|\lambda|\ne 1$ and
     $|\lambda'|=1$.] Then (ii) and
     (iii) do not hold.

\item[{\it Case 3:
    $|\lambda|=|\lambda'|=1$.}] The
    condition $\tr M_f=\pm\tr M_g$
    is equivalent to $\tr
    M_{m_\mu}=\pm \tr M_{m_\nu}$ is
    equivalent to $\lambda
    +\lambda^{-1}=\pm (\lambda'
    +\lambda^{\prime-1})$ is
    equivalent to $\lambda
    +\bar\lambda=\pm (\lambda'
    +\bar\lambda^{\prime})$ is
    equivalent to
    $\lambda=\pm\lambda'$ or
    $\lambda=\pm \bar {\lambda}'$.
\end{description}

\section{Proof of Corollary~\ref{lhp}}\label{kutdx}

The following 4 cases are possible for
any M\"obius transformations $f$
and $g$.

\begin{description}
\item[\it Case 1: $n(f)\ne n(g)$]
    $($see \eqref{lir}$)$. Then
    the assertion (i) of
    Corollary~\ref{lhp} does not
    hold. Let us prove that (ii)
    does not hold too. Suppose that
    $n(f)<n(g)$. If $n(g)=\infty$,
then $g$ is the identity,
$n(f)\in\{1,2\}$, and (ii) does not
hold. Suppose that $n(g)<\infty$.
Then $n(f)=1$ and $n(g)=2$.
By~\eqref{aim} and \eqref{sss}, $f$
is conjugate to $m_1(z)=z+1$ and
$g$ is conjugate to $m_{\mu}(z)=
\lambda^2z$. The linear operators
$x\mapsto M_fx$ and $x\mapsto M_gx$
are conjugate to $x\mapsto\pm
M_{m_1}x$ and $x\mapsto \pm
M_{m_{\mu}}x$, respectively, in
which
$$
M_{m_1}=
\begin{bmatrix}
1 & 1 \\
0 & 1
\end{bmatrix}, \qquad
M_{m_{\mu}}=
\begin{bmatrix}
\lambda & 0 \\
0 & {\lambda}^{-1}
\end{bmatrix}\ (\lambda \ne 0,1).
$$
The vector $[0,0]^T$ is the only
fixed point of the linear operator
$x\mapsto M_{m_{\mu}}x$. All
vectors $[a,0]^T$
($a\in\mathbb C$) are fixed points of
$x\mapsto M_{m_1}x$. Thus, the
assertion (ii) does not hold.

\item[{\it Case 2: $n(f)= n(g)=1$.}]
    By~\eqref{aim}, $f$ and $g$ are
    conjugate to $z\mapsto z+1$.
    By~\eqref{ljkt}, the matrices $M_f$
    and $M_g$ are similar to
$$
\begin{array}{l}
\begin{bmatrix}
1 & 1 \\
0 & 1
\end{bmatrix}
\quad\text{or}\quad
\begin{bmatrix}
-1 & 1 \\
0 & -1
\end{bmatrix}.
\end{array}
$$
Hence $M_f$ is similar to $M_g$ or
$-M_g$. Therefore, $x\mapsto M_fx$
is conjugate to $x\mapsto
\pm M_gx$, and so they are
topologically conjugate.

\item[{\it Case 3: $n(f)=
    n(g)=2$.}] By~\eqref{aim} and
    \eqref{sss}, $f$ is conjugate
    to $z\mapsto \lambda^2z$ and
    $g$ is conjugate to $z\mapsto
    {\lambda'}^2z$, in which
    $\lambda$ and $\lambda'$ are
    eigenvalues of $M_f$ and $M_g$,
    respectively. The Jordan forms
    of $M_f$ and $M_g$ are $\pm
    J_f$ and $\pm J_g$, where
\[
J_f:=
\begin{bmatrix}
\lambda & 0 \\
0 & \lambda^{-1}
\end{bmatrix},
\quad
J_g:=
\begin{bmatrix}
\lambda' & 0 \\
0 & {\lambda'}^{-1}
\end{bmatrix},\qquad \lambda,
\lambda'\notin\{0,\pm
1\}.
\]

By Theorem~\ref{klas-drobb}(iii),
$f$ and $g$ are topologically
conjugate if and only if
\begin{equation*}\label{cf}
|\lambda|,|\lambda'| \neq 1,
\quad\text{or}\quad\lambda=\pm \lambda',
\quad\text{or}\quad\lambda=
\pm \bar {\lambda}',
\end{equation*}
if and only if (see Corollary
\ref{poit}) the linear operators
$x\mapsto J_fx$ and $x\mapsto \pm
J_gx$ are topologically conjugate,
if and only if the linear operators
$x\mapsto M_fx$ and $x\mapsto \pm
M_gx$ are topologically conjugate.

\item[{\it Case 4: $n(f)=
    n(g)>2$.}] The assertions (i)
    and (ii) hold since $f$ and $g$
    are the identity mappings and
    $M_f=\pm M_g=\pm I_2$, in which
    $I_2$ is the identity matrix.
\end{description}

\end{document}